\newtheorem{theorem}{Theorem}
\newtheorem{assumption}[theorem]{Assumption}
\newtheorem{lemma}[theorem]{Lemma}
\newtheorem{corollary}[theorem]{Corollary}
\newtheorem{proposition}[theorem]{Proposition}
\let\epsilon\varepsilon
\icmltitlerunning{A General Analysis of the Convergence of ADMM}
\begin{document} 

\twocolumn[
\icmltitle{A General Analysis of the Convergence of ADMM}

\icmlauthor{Robert Nishihara}{rkn@eecs.berkeley.edu}
\icmlauthor{Laurent Lessard}{lessard@berkeley.edu}
\icmlauthor{Benjamin Recht}{brecht@eecs.berkeley.edu}
\icmlauthor{Andrew Packard}{apackard@berkeley.edu}
\icmlauthor{Michael I.~Jordan}{jordan@eecs.berkeley.edu}
\icmladdress{University of California,
            Berkeley, CA 94720 USA}

\icmlkeywords{ADMM, optimization, semidefinite programming, convergence rates}

\vskip 0.3in
]

\begin{abstract} 
We provide a new proof of the linear convergence of the alternating direction method of multipliers (ADMM) when one of the objective terms is strongly convex. 
Our proof is based on a framework for analyzing optimization algorithms introduced in \citet{lessard2014analysis}, reducing algorithm convergence to verifying the stability of a dynamical system. 
This approach generalizes a number of existing results and obviates any assumptions about specific choices of algorithm parameters. 
On a numerical example, we demonstrate that minimizing the derived bound on the convergence rate provides a practical approach to selecting algorithm parameters for particular ADMM instances. 
We complement our upper bound by constructing a nearly-matching lower bound on the worst-case rate of convergence. 
\end{abstract} 

\section{Introduction}
\label{sec:introduction}

The alternating direction method of multipliers (ADMM) seeks to solve the problem
\begin{equation} \label{eq:minimization_problem}
\begin{aligned}
  \text{minimize} & \quad f(x) + g(z) \\
  \text{subject to} & \quad Ax + Bz = c ,
\end{aligned}
\end{equation}
with variables~$x \in \mathbb R^p$ and~$z \in \mathbb R^q$ and constants~$A \in \mathbb R^{r \times p}$,~$B \in \mathbb R^{r \times q}$, and~$c \in \mathbb R^r$. 
ADMM was introduced in \citet{glowinski1975approximation} and \citet{gabay1976dual}. 
More recently, it has found applications in a variety of distributed settings such as model fitting, resource allocation, and classification. 
A partial list of examples includes \citet{bioucas2010alternating,wahlberg2012admm,bird2014optimizing,forero2010consensus,sedghi2014multi,li2014communication,wang2012online,zhang2012efficient,meshi2011alternating,burges2013large,aslan2013convex,forouzan2013linear,romera2013new,behmardi2014overlapping,zhang2014asynchronous}. 
See \citet{boyd2011distributed} for an overview. 

Part of the appeal of ADMM is the fact that, in many contexts, the algorithm updates lend themselves to parallel implementations. 
The algorithm is given in \algoref{alg:admm}. 
We refer to~$\rho>0$ as the step-size parameter. 

\begin{algorithm}
  \caption{Alternating Direction Method of Multipliers}
  \label{alg:admm}
\begin{algorithmic}[1]
  \STATE {\bfseries Input:} functions~$f$ and~$g$, matrices~$A$ and~$B$, vector~$c$, parameter~$\rho$
  \STATE Initialize~$x_0,z_0,u_0$
  \REPEAT
  \STATE $x_{k+1} = \argmin_x f(x) + \tfrac{\rho}{2} \|Ax + Bz_k - c + u_k\|^2$
  \STATE $z_{k+1} = \argmin_z g(z) + \tfrac{\rho}{2} \|Ax_{k+1} + Bz - c + u_k\|^2$
  \STATE $u_{k+1} = u_k + Ax_{k+1} + Bz_{k+1} - c$.
  \UNTIL{meet stopping criterion}
\end{algorithmic}
\end{algorithm}

A popular variant of \algoref{alg:admm} is over-relaxed ADMM, which introduces an additional parameter~$\alpha$ and replaces each instance of~$Ax_{k+1}$ in the~$z$ and~$u$ updates in \algoref{alg:admm} with
\begin{equation*}
  \alpha A x_{k+1} - (1-\alpha)(Bz_k-c) .
\end{equation*}
The parameter~$\alpha$ is typically chosen to lie in the interval~$(0,2]$, but we demonstrate in \secref{sec:selecting_hyperparameters} that a larger set of choices can lead to convergence. 
Over-relaxed ADMM is described in \algoref{alg:overrelaxed_admm}. 
When~$\alpha=1$, \algoref{alg:overrelaxed_admm} and \algoref{alg:admm} coincide. 
We will analyze \algoref{alg:overrelaxed_admm}. 

\begin{algorithm}
\caption{Over-Relaxed Alternating Direction Method of Multipliers}
\label{alg:overrelaxed_admm}
\begin{algorithmic}[1]
  \STATE {\bfseries Input:} functions~$f$ and~$g$, matrices~$A$ and~$B$, vector~$c$, parameters~$\rho$ and~$\alpha$
  \STATE Initialize~$x_0,z_0,u_0$
  \REPEAT
  \STATE $x_{k+1} = \argmin_x f(x) + \tfrac{\rho}{2} \|Ax + Bz_k - c + u_k\|^2$
  \STATE $z_{k+1} = \argmin_z g(z) + \tfrac{\rho}{2} \|\alpha Ax_{k+1} - (1-\alpha)Bz_k  + Bz - \alpha c + u_k\|^2$
  \STATE $u_{k+1} = u_k + \alpha Ax_{k+1} - (1-\alpha)Bz_k + Bz_{k+1} - \alpha c$
  \UNTIL{meet stopping criterion}
\end{algorithmic}
\end{algorithm}

The conventional wisdom that ADMM works well without any tuning \citep{boyd2011distributed}, for instance by setting~$\rho=1$, is often not borne out in practice. 
\algoref{alg:admm} can be challenging to tune, and \algoref{alg:overrelaxed_admm} is even harder. 
We use the machinery developed in this paper to make reasonable recommendations for setting~$\rho$ and~$\alpha$ when some information about~$f$ is available (\secref{sec:selecting_hyperparameters}). 

In this paper, we give an upper bound on the linear rate of convergence of \algoref{alg:overrelaxed_admm} for all~$\rho$ and~$\alpha$ (\thref{th:analytic_rate}), and we give a nearly-matching lower bound (\thref{th:lower_bound}). 

Importantly, we show that we can prove convergence rates for \algoref{alg:overrelaxed_admm} by numerically solving a~$4\times 4$ semidefinite program (\thref{th:lmi_rate}). 
When we change the parameters of \algoref{alg:overrelaxed_admm}, the semidefinite program changes. 
Whereas prior work requires a new proof of convergence for every change to the algorithm, our work automates that process. 

Our work builds on the integral quadratic constraint framework introduced in \citet{lessard2014analysis}, which uses ideas from robust control to analyze optimization algorithms that can be cast as discrete-time linear dynamical systems. 
Related ideas, in the context of feedback control, appear in \citet{corless1990guaranteed,d'alto2013incremental}. 
Our work provides a flexible framework for analyzing variants of \algoref{alg:admm}, including those like \algoref{alg:overrelaxed_admm} created by the introduction of additional parameters. 
In \secref{sec:related_work}, we compare our results to prior work.

\section{Preliminaries and Notation}
\label{sec:preliminaries}

Let~$\overline{\mathbb R}$ denote the extended real numbers~$\mathbb R \cup \{+\infty\}$. 
Suppose that~$f \colon \mathbb R^d \to \overline{\mathbb R}$ is convex and differentiable, and let~$\nabla f$ denote the gradient of~$f$. 
We say that~$f$ is strongly convex with parameter~$m > 0$ if for all~$x,y \in \mathbb R^d$, we have
\begin{equation*}
  f(x) \ge f(y) + \nabla f(y)^{\top}(x-y) + \tfrac{m}{2} \|x-y\|^2 .
\end{equation*}
When~$\nabla f$ is Lipschitz continuous with parameter~$L$, then
\begin{equation*}
  f(x) \le f(y) + \nabla f(y)^{\top}(x-y) + \tfrac{L}{2} \|x-y\|^2 .
\end{equation*}
For~$0 < m \le L < \infty$, let~$S_d(m,L)$ denote the set of differentiable convex functions~$f \colon \mathbb R^d \to \overline{\mathbb R}$ that are strongly convex with parameter~$m$ and whose gradients are Lipschitz continuous with parameter~$L$. 
We let~$S_d(0,\infty)$ denote the set of convex functions~$\mathbb R^d \to \overline{\mathbb R}$. 
In general, we let~$\partial f$ denote the subdifferential of~$f$. 
We denote the~$d$-dimensional identity matrix by~$I_d$ and the~$d$-dimensional zero matrix by~$0_d$. 
We will use the following results. 
\begin{lemma} \label{lem:sec_iqc}
  Suppose that~$f \in S_d(m,L)$, where~$0 < m \le L < \infty$. 
  Suppose that~$b_1 = \nabla f(a_1)$ and~$b_2 = \nabla f(a_2)$. 
  Then
  \begin{equation*}
    \begin{bmatrix} a_1-a_2 \\ b_1-b_2 \end{bmatrix}^{\top}
    \begin{bmatrix} -2mL I_d & (m+L)I_d \\ (m+L)I_d & -2I_d \end{bmatrix}
    \begin{bmatrix} a_1-a_2 \\ b_1-b_2 \end{bmatrix}
    \ge 0 .
  \end{equation*}
\end{lemma}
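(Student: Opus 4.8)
The plan is to first strip away the block structure. Writing $a = a_1 - a_2$ and $b = b_1 - b_2 = \nabla f(a_1) - \nabla f(a_2)$, the quadratic form in the statement equals $-2mL\|a\|^2 + 2(m+L)\,a^{\top} b - 2\|b\|^2$, so after dividing by $2$ it suffices to establish the scalar inequality
\[
(m+L)\, a^{\top} b \;\ge\; mL\,\|a\|^2 + \|b\|^2 .
\]
This is the standard ``interpolation'' inequality characterizing functions in $S_d(m,L)$, and the remainder of the argument derives it from the two defining inequalities recorded just before the lemma.

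The key device is the shifted function $\phi(x) = f(x) - \tfrac{m}{2}\|x\|^2$. I would first check that $\phi$ is convex with an $(L-m)$-Lipschitz gradient: convexity is immediate from the strong-convexity lower bound for $f$, while the smoothness upper bound $\phi(x) \le \phi(y) + \nabla\phi(y)^{\top}(x-y) + \tfrac{L-m}{2}\|x-y\|^2$ follows by substituting $\nabla\phi = \nabla f - m\,\mathrm{Id}$ into the Lipschitz upper bound for $f$ and simplifying, the $\|x\|^2$ contributions cancelling exactly. Thus $\phi$ is a convex function whose gradient is Lipschitz with constant $L-m$, which reduces the claim to a statement about a single smooth convex function.

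The main analytic tool is then the co-coercivity of the gradient of a convex function with $C$-Lipschitz gradient, namely $(\nabla\phi(a_1)-\nabla\phi(a_2))^{\top}(a_1-a_2) \ge \tfrac{1}{C}\|\nabla\phi(a_1)-\nabla\phi(a_2)\|^2$ with $C = L-m$. Substituting $\nabla\phi(a_i) = b_i - m a_i$, so that $\nabla\phi(a_1) - \nabla\phi(a_2) = b - m a$, this reads
\[
(b - m a)^{\top} a \;\ge\; \tfrac{1}{L-m}\,\|b - m a\|^2 .
\]
Clearing the denominator and expanding, the $\|a\|^2$ coefficients combine as $-m(L-m) - m^2 = -mL$ and the cross terms as $(L-m) + 2m = m+L$, recovering precisely $(m+L)\,a^{\top} b \ge mL\,\|a\|^2 + \|b\|^2$. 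The degenerate case $m=L$, where this division is illegitimate, I would dispatch separately: there $f$ is quadratic with affine gradient of slope $m$, so $b = m a$ and the form vanishes identically, giving equality.

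I expect the one piece of genuine work to be the co-coercivity inequality itself, rather than the surrounding bookkeeping. I would prove it by the descent-lemma argument: for fixed $z$ the function $h_z(x) = \phi(x) - \nabla\phi(z)^{\top} x$ is convex, $(L-m)$-smooth, and minimized at $z$, so evaluating its smoothness upper bound at the gradient step $x - \tfrac{1}{L-m}\nabla h_z(x)$ gives $h_z(z) \le h_z(x) - \tfrac{1}{2(L-m)}\|\nabla h_z(x)\|^2$. Applying this once with $z=a_1,\,x=a_2$ and once with $z=a_2,\,x=a_1$ and adding the two inequalities yields co-coercivity, after which only the algebraic simplification above remains. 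Alternatively, if one is content to invoke the standard characterization of $S_d(m,L)$ (for instance Nesterov's treatment), the inequality is available off the shelf and the proof collapses to the reduction and the final expansion.
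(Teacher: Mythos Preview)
Your proposal is correct and follows essentially the same route as the paper: reduce the quadratic form to the scalar inequality $(m+L)\,a^{\top}b \ge mL\|a\|^2 + \|b\|^2$, shift to $\phi(x)=f(x)-\tfrac{m}{2}\|x\|^2$, and apply co-coercivity of its $(L-m)$-Lipschitz gradient. The paper invokes co-coercivity as a known consequence of Lipschitz gradients without proof, whereas you additionally sketch the descent-lemma derivation and handle the $m=L$ edge case, but these are elaborations of the same argument rather than a different approach.
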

\begin{proof}
  The Lipschitz continuity of~$\nabla f$ implies the co-coercivity of~$\nabla f$, that is
\begin{equation*} \label{eq:co-coercivity}
  (a_1 - a_2)^{\top} (b_1-b_2) \ge \tfrac{1}{L} \| b_1 - b_2 \|^2 .
\end{equation*}
Note that~$f(x)-\frac{m}{2}\|x\|^2$ is convex and its gradient is Lipschitz continuous with parameter~$L-m$. 
Applying the co-coercivity condition to this function and rearranging gives
\begin{equation*}
  (m+L)(a_1 - a_2)^{\top} (b_1-b_2)  \ge mL \|a_1 - a_2\|^2 + \| b_1 - b_2 \|^2 ,
\end{equation*}
which can be put in matrix form to complete the proof. 
\end{proof}

\begin{lemma} \label{lem:sec_iqc2}
  Suppose that~$f \in S_d(0,\infty)$, and suppose that~$b_1 \in \partial f(a_1)$ and~$b_2 \in \partial f(a_2)$. 
  Then
  \begin{equation*}
    \begin{bmatrix} a_1-a_2 \\ b_1-b_2 \end{bmatrix}^{\top}
    \begin{bmatrix} 0_d & I_d \\ I_d & 0_d \end{bmatrix}
    \begin{bmatrix} a_1-a_2 \\ b_1-b_2 \end{bmatrix}
    \ge 0 .
  \end{equation*}
\end{lemma}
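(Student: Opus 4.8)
The plan is to collapse the matrix inequality down to the standard monotonicity property of subdifferentials. First I would expand the block quadratic form explicitly. Writing~$\Delta a = a_1 - a_2$ and~$\Delta b = b_1 - b_2$, applying the off-diagonal matrix swaps the two blocks, so the left-hand side becomes~$\Delta a^\top \Delta b + \Delta b^\top \Delta a = 2(a_1-a_2)^\top(b_1-b_2)$. Hence the claim is equivalent to the scalar inequality~$(a_1-a_2)^\top(b_1-b_2) \ge 0$.

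Next I would invoke the defining subgradient inequality at each point. Since~$b_1 \in \partial f(a_1)$, convexity gives~$f(a_2) \ge f(a_1) + b_1^\top(a_2-a_1)$, and since~$b_2 \in \partial f(a_2)$, symmetrically~$f(a_1) \ge f(a_2) + b_2^\top(a_1-a_2)$. Adding these two inequalities cancels the function values and leaves~$0 \ge (b_1 - b_2)^\top(a_2 - a_1)$, which rearranges to exactly~$(a_1-a_2)^\top(b_1-b_2) \ge 0$, completing the argument.

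I do not expect a genuine obstacle here: this lemma is simply the statement that the subdifferential of a convex function is a monotone operator, and it follows by adding two subgradient inequalities. The only step requiring slight care is the expansion of the block matrix, where one must correctly track the symmetry that produces the factor of~$2$. As a sanity check, this result is the degenerate~$m \to 0$,~$L \to \infty$ limit of \lemref{lem:sec_iqc}: dividing the matrix there by~$L$ and sending~$m \to 0$ and~$L \to \infty$ drives the diagonal blocks to zero and normalizes the off-diagonal blocks to~$I_d$, recovering precisely the matrix in this statement. This confirms the two lemmas are mutually consistent.
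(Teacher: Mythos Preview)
Your proposal is correct and matches the paper's own treatment: the paper does not give a formal proof but simply remarks that \lemref{lem:sec_iqc2} is the statement that the subdifferential of a convex function is a monotone operator, which is exactly what you establish by adding the two subgradient inequalities. Your write-up is in fact more detailed than what the paper provides.
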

\lemref{lem:sec_iqc2} is simply the statement that the subdifferential of a convex function is a monotone operator. 

When~$M$ is a matrix, we use~$\kappa_M$ to denote the condition number of~$M$. 
For example,~$\kappa_A=\sigma_1(A)/\sigma_p(A)$, where~$\sigma_1(A)$ and~$\sigma_p(A)$ denote the largest and smallest singular values of the matrix~$A$. 
When~$f \in S_d(m,L)$, we let~$\kappa_f=\frac{L}{m}$ denote the condition number of~$f$. 
We denote the Kronecker product of matrices~$M$ and~$N$ by~$M\otimes N$.

\section{ADMM as a Dynamical System} \label{sec:admm_as_dynamical_system}

We group our assumptions together in \assref{ass:conditions}. 
\begin{assumption} \label{ass:conditions}
  We assume that~$f$ and~$g$ are convex, closed, and proper. 
  We assume that for some~$0 < m \le L < \infty$, we have~$f \in S_p(m,L)$ and~$g \in S_q(0,\infty)$. 
  We assume that~$A$ is invertible and that~$B$ has full column rank. 
\end{assumption}

The assumption that~$f$ and~$g$ are closed (their sublevel sets are closed) and proper (they neither take on the value~$-\infty$ nor are they uniformly equal to~$+\infty$) is standard.

We begin by casting over-relaxed ADMM as a discrete-time dynamical system with state sequence~$(\xi_k)$, input sequence~$(\nu_k)$, and output sequences~$(w_k^1)$ and~$(w_k^2)$ satisfying the recursions
\begin{subequations}
\label{eq:admm_dynamical_system_update_output} 
\begin{align} 
  \xi_{k+1} & = (\hat{A} \otimes I_r) \xi_k + (\hat{B} \otimes I_r) \nu_k \label{eq:admm_dynamical_system_update_output1} \\
  w_k^1 & = (\hat{C}^{1} \otimes I_r) \xi_k + (\hat{D}^{1} \otimes I_r) \nu_k \label{eq:admm_dynamical_system_update_output2} \\
  w_k^2 & = (\hat{C}^{2} \otimes I_r) \xi_k + (\hat{D}^{2} \otimes I_r) \nu_k  \label{eq:admm_dynamical_system_update_output3}
\end{align}
\end{subequations}
for particular matrices~$\hA$,~$\hB$,~$\hC^1$,~$\hD^1$,~$\hC^2$, and~$\hD^2$ (whose dimensions do not depend on any problem parameters). 

First define the functions~$\hat{f},\hat{g} \colon \mathbb R^r \to \overline{\mathbb R}$ via
\begin{equation} \label{eq:def_fhat_ghat}
\begin{aligned}
  \hat{f} & = (\rho^{-1}f)\circ A^{-1} \\
  \hat{g} & = (\rho^{-1}g)\circ B^{\dagger} + \mathbb I_{\image{B}} ,
\end{aligned}
\end{equation}
where~$B^{\dagger}$ is any left inverse of~$B$ and where~$\mathbb I_{\image{B}}$ is the~$\{0,\infty\}$-indicator function of the image of~$B$. 
We define~$\kappa = \kappa_f\kappa_A^2$ and to normalize we define
\begin{equation} \label{eq:normalization_defs}
  \hat{m}=\frac{m}{\sigma_1^2(A)}
  \quad 
  \hat{L}=\frac{L}{\sigma_p^2(A) } 
  \quad 
  \rho = (\hat{m}\hat{L})^{\frac12}\rho_0 .
\end{equation}
Note that under \assref{ass:conditions}, 
\begin{subequations} \label{eq:fhat_ghat}
\begin{align}
  \hat{f} & \in S_p(\rho_0^{-1}\kappa^{-\frac{1}{2}}, \rho_0^{-1}\kappa^{\frac{1}{2}}) \label{eq:fhat_ghata} \\
  \hat{g} & \in S_q(0, \infty) . \label{eq:fhat_ghatb}
\end{align}
\end{subequations}

To define the relevant sequences, let the sequences $(x_k)$,~$(z_k)$, and~$(u_k)$ be generated by \algoref{alg:overrelaxed_admm} with parameters~$\alpha$ and~$\rho$. 
Define the sequences~$(r_k)$ and~$(s_k)$ by~$r_k=Ax_k$ and~$s_k=Bz_k$ and the sequence~$(\xi_k)$ by
\begin{equation*}
  \xi_k=\begin{bmatrix} s_k \\ u_k \end{bmatrix} . 
\end{equation*}
We define the sequence~$(\nu_k)$ as in \propref{prop:dynamical_system_update}. 

\begin{proposition} \label{prop:dynamical_system_update}
  There exist sequences~$(\beta_k)$ and~$(\gamma_k)$ with~$\beta_k = \nabla \hat{f}(r_k)$ and~$\gamma_k \in \partial \hat{g}(s_k)$ such that when we define the sequence~$(\nu_k)$ by
\begin{equation*} 
  \nu_k=\begin{bmatrix} \beta_{k+1} \\ \gamma_{k+1} \end{bmatrix},
\end{equation*}
then~$(\xi_k)$ and~$(\nu_k)$ satisfy \eqref{eq:admm_dynamical_system_update_output1} with the matrices
\begin{equation} \label{eq:admm_matrices_update}
  \hat{A} = \begin{bmatrix} 1 & \alpha-1 \\ 0 & 0 \end{bmatrix}
\quad\quad
  \hat{B} = \begin{bmatrix} \alpha & -1 \\ 0 & -1 \end{bmatrix}  .
\end{equation} 
\end{proposition}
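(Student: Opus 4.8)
The plan is to read off the two block-rows of the claimed recursion and then produce the sequences $\beta_k$ and $\gamma_k$ directly from the first-order optimality conditions of the two $\argmin$ subproblems defining \algoref{alg:overrelaxed_admm}. Writing $\xi_{k+1} = (\hat A\otimes I_r)\xi_k + (\hat B\otimes I_r)\nu_k$ out with the stated matrices and $\nu_k = \begin{bmatrix}\beta_{k+1}\\\gamma_{k+1}\end{bmatrix}$ gives the two block identities $s_{k+1} = s_k + (\alpha-1)u_k + \alpha\beta_{k+1} - \gamma_{k+1}$ and $u_{k+1} = -\gamma_{k+1}$. So it suffices to define $\beta_{k+1}=\nabla\hat f(r_{k+1})$ and $\gamma_{k+1}\in\partial\hat g(s_{k+1})$ suitably and then verify these two identities.

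For the first row I would use the $x$-update, whose stationarity condition is $\nabla f(x_{k+1}) + \rho A^\top(Ax_{k+1}+Bz_k-c+u_k)=0$. Since $\hat f = (\rho^{-1}f)\circ A^{-1}$, the chain rule gives $\nabla\hat f(r) = \rho^{-1}A^{-\top}\nabla f(A^{-1}r)$, and evaluating at $r_{k+1}=Ax_{k+1}$ (so that $A^{-1}r_{k+1}=x_{k+1}$) together with the stationarity condition yields the closed form $\beta_{k+1}:=\nabla\hat f(r_{k+1}) = -(r_{k+1}+s_k-c+u_k)$, which by construction has the required form $\beta_{k+1}=\nabla\hat f(r_{k+1})$.

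For the second row I would combine the $z$- and $u$-updates. The crucial simplification is that the argument of the norm in the $z$-subproblem, once $u_k$ is added, is precisely $u_{k+1}$: writing $r_{k+1}=Ax_{k+1}$, $s_k=Bz_k$, $s_{k+1}=Bz_{k+1}$ we have $u_{k+1}=u_k+\alpha r_{k+1}-(1-\alpha)s_k+s_{k+1}-\alpha c$, so the stationarity condition $0\in\partial g(z_{k+1})+\rho B^\top(\alpha r_{k+1}-(1-\alpha)s_k+s_{k+1}-\alpha c+u_k)$ collapses to $0\in\partial g(z_{k+1})+\rho B^\top u_{k+1}$, i.e. $B^\top(-u_{k+1})\in\partial(\rho^{-1}g)(z_{k+1})$. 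I would then set $\gamma_{k+1}:=-u_{k+1}$, which instantly delivers the second identity $u_{k+1}=-\gamma_{k+1}$.

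The main obstacle is checking that this $\gamma_{k+1}$ really lies in $\partial\hat g(s_{k+1})$, and this is where the indicator term in $\hat g=(\rho^{-1}g)\circ B^\dagger+\mathbb I_{\image{B}}$ does the work. Because $\hat g$ is $+\infty$ off $\image{B}$ and because $B$ has full column rank (so $B^\dagger B=I$ and each $s\in\image{B}$ has the unique preimage $z=B^\dagger s$), the subgradient inequality for $\hat g$ at $s=Bz$ need only be tested against $s'=Bz'\in\image{B}$, where it reads $\rho^{-1}g(z')\ge\rho^{-1}g(z)+(B^\top\gamma)^\top(z'-z)$; hence $\gamma\in\partial\hat g(Bz)$ if and only if $B^\top\gamma\in\partial(\rho^{-1}g)(z)$. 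Applying this equivalence with $z=z_{k+1}$ and the relation $B^\top(-u_{k+1})\in\partial(\rho^{-1}g)(z_{k+1})$ derived above certifies $\gamma_{k+1}=-u_{k+1}\in\partial\hat g(s_{k+1})$. Finally, the first-row identity is a direct substitution: replacing $\gamma_{k+1}$ by $-u_{k+1}$ and $\beta_{k+1}$ by $-(r_{k+1}+s_k-c+u_k)$, then expanding $u_{k+1}$ through the $u$-update, collapses the right-hand side to $s_{k+1}$, which completes the verification.
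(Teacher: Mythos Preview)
Your proposal is correct and follows essentially the same route as the paper: derive $\beta_{k+1}=-(r_{k+1}+s_k-c+u_k)$ from the $x$-step optimality condition, identify $\gamma_{k+1}=-u_{k+1}$ from the $z$- and $u$-steps, and substitute to verify the two block rows. The only presentational difference is that the paper first rewrites each $\argmin$ in the transformed variables $r,s$ (so that $\hat f$ and $\hat g$ appear directly and $\gamma_{k+1}\in\partial\hat g(s_{k+1})$ is read off from stationarity), whereas you keep the optimality conditions in the original $x,z$ variables and then explicitly justify $\gamma_{k+1}\in\partial\hat g(s_{k+1})$ via the subgradient-inequality argument; this extra step is a welcome clarification of a point the paper leaves implicit.
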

\begin{proof}
Using the fact that~$A$ has full rank, we rewrite the update rule for~$x$ from \algoref{alg:overrelaxed_admm} as
\begin{align*}
  x_{k+1} = A^{-1} \argmin_{r} f(A^{-1}r) + \tfrac{\rho}{2} \|r + s_k - c + u_k \|^2 .
\end{align*}
Multiplying through by~$A$, we can write
\begin{equation*}
  r_{k+1} = \argmin_{r} \hat{f}(r) + \tfrac{1}{2} \|r + s_k - c + u_k \|^2 .
\end{equation*}
This implies that
\begin{equation*}
  0 = \nabla \hat{f}(r_{k+1}) + r_{k+1} + s_k - c + u_k ,
\end{equation*}
and so
\begin{equation} \label{eq:r_update}
  r_{k+1} = -s_k - u_k + c - \beta_{k+1} ,
\end{equation}
where~$\beta_{k+1}=\nabla \hat{f}(r_{k+1})$. 
In the same spirit, we rewrite the update rule for~$z$ as
\begin{align*}
 s_{k+1} & = \argmin_{s} \hat{g}(s) \\
 & \quad\quad\quad + \tfrac{1}{2} \|\alpha r_{k+1} - (1-\alpha)s_k + s - \alpha c + u_k \|^2 .
\end{align*}
It follows that there exists some~$\gamma_{k+1} \in \partial\hat{g}(s_{k+1})$ such that
\begin{equation*}
  0 = \gamma_{k+1} + \alpha r_{k+1} - (1-\alpha)s_k + s_{k+1} - \alpha c + u_k .
\end{equation*}
It follows then that
\begin{equation} \label{eq:s_update}
\begin{aligned}
  s_{k+1} & =  - \alpha r_{k+1} + (1-\alpha)s_k + \alpha c - u_k - \gamma_{k+1} \\
  & = s_k - (1-\alpha)u_k + \alpha \beta_{k+1} - \gamma_{k+1} ,
\end{aligned}
\end{equation}
where the second equality follows by substituting in \eqref{eq:r_update}. 
Combining \eqref{eq:r_update} and \eqref{eq:s_update} to simplify the~$u$ update, we have
\begin{equation} \label{eq:u_update}
\begin{aligned}
  u_{k+1} & = u_k + \alpha r_{k+1} - (1-\alpha)s_k + s_{k+1} - \alpha c \\
  & = - \gamma_{k+1} .
\end{aligned}
\end{equation}
Together, \eqref{eq:s_update} and \eqref{eq:u_update} confirm the relation in \eqref{eq:admm_dynamical_system_update_output1}. 
\end{proof}

\begin{corollary} \label{cor:dynamical_system_output}
  Define the sequences~$(\beta_k)$ and~$(\gamma_k)$ as in \propref{prop:dynamical_system_update}. 
  Define the sequences~$(w_k^1)$ and~$(w_k^2)$ via
  \begin{equation*}
    w_k^1= \begin{bmatrix} r_{k+1}-c \\ \beta_{k+1} \end{bmatrix} \quad\quad w_k^2 = \begin{bmatrix} s_{k+1} \\ \gamma_{k+1} \end{bmatrix} .
  \end{equation*}
  Then the sequences~$(\xi_k)$,~$(\nu_k)$,~$(w_k^1)$, and~$(w_k^2)$ satisfy \eqref{eq:admm_dynamical_system_update_output2} and \eqref{eq:admm_dynamical_system_update_output3} with the matrices
  \begin{equation} \label{eqref:admm_matrices_output}
    \begin{gathered}
      \hat{C}^1  = \begin{bmatrix} -1 & -1 \\ 0 & 0 \end{bmatrix} \quad
      \hat{D}^1  = \begin{bmatrix} -1 & 0 \\ 1 & 0 \end{bmatrix} \\
      \hat{C}^2  = \begin{bmatrix} 1 & \alpha-1 \\ 0 & 0 \end{bmatrix} \quad
      \hat{D}^2  = \begin{bmatrix} \alpha & -1 \\ 0 & 1 \end{bmatrix}  .
    \end{gathered}
  \end{equation}
\end{corollary}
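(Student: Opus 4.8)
The plan is to prove this corollary by direct verification: the matrices $\hat C^1,\hat D^1,\hat C^2,\hat D^2$ have scalar entries (acting through the Kronecker product with $I_r$ as coefficients on each $r$-dimensional block), so I simply expand the right-hand sides of \eqref{eq:admm_dynamical_system_update_output2} and \eqref{eq:admm_dynamical_system_update_output3} and check that each block matches the definitions of $w_k^1$ and $w_k^2$. Since $\xi_k = \begin{bmatrix} s_k \\ u_k \end{bmatrix}$ and $\nu_k = \begin{bmatrix} \beta_{k+1} \\ \gamma_{k+1} \end{bmatrix}$, the four claimed matrix identities reduce to four scalar-coefficient equations relating $r_{k+1}$, $s_{k+1}$, $\beta_{k+1}$, and $\gamma_{k+1}$ to $s_k$, $u_k$, $\beta_{k+1}$, and $\gamma_{k+1}$.

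First I would handle $w_k^1$. Expanding $(\hat C^1\otimes I_r)\xi_k + (\hat D^1\otimes I_r)\nu_k$ with the stated matrices, the top block reads $-s_k - u_k - \beta_{k+1}$, which is exactly $r_{k+1}-c$ by the already-established update \eqref{eq:r_update}, namely $r_{k+1} = -s_k - u_k + c - \beta_{k+1}$. The bottom block reads $\beta_{k+1}$, matching the second entry of $w_k^1$ trivially. Thus \eqref{eq:admm_dynamical_system_update_output2} holds.

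Next I would handle $w_k^2$. The top block of $(\hat C^2\otimes I_r)\xi_k + (\hat D^2\otimes I_r)\nu_k$ is $s_k + (\alpha-1)u_k + \alpha\beta_{k+1} - \gamma_{k+1}$, which coincides with the second form of the $s$-update in \eqref{eq:s_update}, since $-(1-\alpha)u_k = (\alpha-1)u_k$. The bottom block is $\gamma_{k+1}$, matching the second entry of $w_k^2$. Hence \eqref{eq:admm_dynamical_system_update_output3} holds as well.

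There is no genuine obstacle here; the proof is pure bookkeeping, and the entire content is contained in the update relations \eqref{eq:r_update} and \eqref{eq:s_update} derived in \propref{prop:dynamical_system_update}. The only place demanding care is sign conventions — in particular, reconciling the $(\alpha-1)$ coefficient in $\hat C^2$ with the $-(1-\alpha)$ appearing in \eqref{eq:s_update}, and confirming that the two trivial output rows (which merely read off $\beta_{k+1}$ and $\gamma_{k+1}$ from $\nu_k$) are correctly produced by the second rows of $\hat D^1$ and $\hat D^2$ together with the vanishing second rows of $\hat C^1$ and $\hat C^2$.
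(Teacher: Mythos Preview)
Your proposal is correct and is exactly the intended approach: the paper states this corollary without proof because it follows immediately from the update relations \eqref{eq:r_update} and \eqref{eq:s_update} derived in the proof of \propref{prop:dynamical_system_update}, and your verification spells out precisely that bookkeeping.
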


\section{Convergence Rates from Semidefinite Programming}
\label{sec:convergence_rates_through_semidefinite_programming}

Now, in \thref{th:lmi_rate}, we make use of the perspective developed in \secref{sec:admm_as_dynamical_system} to obtain convergence rates for \algoref{alg:overrelaxed_admm}. 
This is essentially the same as the main result of \citet{lessard2014analysis}, and we include it because it is simple and self-contained. 

\begin{theorem} \label{th:lmi_rate}
  Suppose that \assref{ass:conditions} holds. 
  Let the sequences~$(x_k)$,~$(z_k)$, and~$(u_k)$ be generated by running \algoref{alg:overrelaxed_admm} with step size~$\rho=(\hat{m}\hat{L})^{\frac12}\rho_0$ and with over-relaxation parameter~$\alpha$. 
  Suppose that~$(x_*,z_*,u_*)$ is a fixed point of \algoref{alg:overrelaxed_admm}, and define
  \begin{equation*}
    \varphi_k = \begin{bmatrix} z_k \\ u_k \end{bmatrix} \quad\quad  \varphi_* = \begin{bmatrix} z_* \\ u_* \end{bmatrix} .
  \end{equation*}
  Fix~$0<\tau<1$, and suppose that there exist a~$2\times2$ positive definite matrix~$P \succ 0$ and nonnegative constants~$\lambda^1,\lambda^2 \ge 0$ such that the~$4\times4$ linear matrix inequality
\begin{equation} \label{eq:lmi}
\begin{aligned} 
  0 \succeq & \begin{bmatrix} \hat{A}^{\top}P\hat{A} - \tau^2P & \hat{A}^{\top}P\hat{B} \\ \hat{B}^{\top}P\hat{A} & \hat{B}^{\top}P\hat{B} \end{bmatrix} \\
  & +  \begin{bmatrix} \hat{C}^{1} & \hat{D}^{1} \\ \hat{C}^{2} & \hat{D}^{2} \end{bmatrix}^{\top} \begin{bmatrix} \lambda^1 M^{1} & 0 \\ 0 & \lambda^2 M^{2} \end{bmatrix} \begin{bmatrix} \hat{C}^{1} & \hat{D}^{1} \\ \hat{C}^{2} & \hat{D}^{2} \end{bmatrix} 
\end{aligned}
\end{equation}
is satisfied, where~$\hat{A}$ and~$\hat{B}$ are defined in \eqref{eq:admm_matrices_update}, where~$\hat{C}^1$,~$\hat{D}^1$,~$\hat{C}^2$, and~$\hat{D}^2$ are defined in \eqref{eqref:admm_matrices_output}, and where~$M^1$ and~$M^2$ are given by
\begin{equation*}
  M^1  = \begin{bmatrix} -2\rho_0^{-2}  & \rho_0^{-1}(\kappa^{-\frac{1}{2}} + \kappa^{\frac{1}{2}}) \\ \rho_0^{-1}(\kappa^{-\frac{1}{2}} + \kappa^{\frac{1}{2}}) & - 2 \end{bmatrix} 
\end{equation*}
\begin{equation*}
  M^2  = \begin{bmatrix} 0 & 1 \\ 1 & 0 \end{bmatrix}  . 
\end{equation*}
Then for all~$k \ge 0$, we have
\begin{equation*}
  \|\varphi_k-\varphi_*\| \le \kappa_B \sqrt{\kappa_P} \|\varphi_0-\varphi_*\| \tau^k .
\end{equation*}
\end{theorem}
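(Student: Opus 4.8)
The plan is to read \eqref{eq:lmi} as a dissipativity (Lyapunov) certificate and run the integral-quadratic-constraint argument of \citet{lessard2014analysis}. I define the candidate storage function $V_k = (\xi_k-\xi_*)^{\top}(P\otimes I_r)(\xi_k-\xi_*)$, where $\xi_*=\begin{bmatrix}s_*\\u_*\end{bmatrix}$ with $s_*=Bz_*$. The whole argument reduces to proving the one-step contraction $V_{k+1}\le\tau^2 V_k$; once that holds, iterating gives $V_k\le\tau^{2k}V_0$, and translating from the state $\xi_k$ back to $\varphi_k$ yields the stated bound.

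First I would pass to deviations from the fixed point. Since $(x_*,z_*,u_*)$ is a fixed point of \algoref{alg:overrelaxed_admm}, the associated quantities $r_*=Ax_*$, $s_*=Bz_*$, $\beta_*=\nabla\hat f(r_*)$, and a suitable $\gamma_*\in\partial\hat g(s_*)$ satisfy exactly the relations derived in \propref{prop:dynamical_system_update}, so $\xi_*=(\hat A\otimes I_r)\xi_*+(\hat B\otimes I_r)\nu_*$ with $\nu_*=\begin{bmatrix}\beta_*\\\gamma_*\end{bmatrix}$. Subtracting, the deviations $\tilde\xi_k=\xi_k-\xi_*$, $\tilde\nu_k=\nu_k-\nu_*$, and $\tilde w_k^j=w_k^j-w_*^j$ obey the same linear maps \eqref{eq:admm_dynamical_system_update_output1}--\eqref{eq:admm_dynamical_system_update_output3}. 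The two output blocks are precisely what the lemmas control: applying \lemref{lem:sec_iqc} to $\hat f$, whose parameters are $\rho_0^{-1}\kappa^{-1/2}$ and $\rho_0^{-1}\kappa^{1/2}$ by \eqref{eq:fhat_ghata}, at the points $r_{k+1}$ and $r_*$ gives $(\tilde w_k^1)^{\top}(M^1\otimes I_r)\tilde w_k^1\ge 0$, because the matrix appearing in \lemref{lem:sec_iqc} for these parameters is exactly $M^1$; and applying \lemref{lem:sec_iqc2} to $\hat g$ at $s_{k+1}$ and $s_*$ gives $(\tilde w_k^2)^{\top}(M^2\otimes I_r)\tilde w_k^2\ge 0$.

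Next comes the core computation. I would left- and right-multiply the matrix on the right-hand side of \eqref{eq:lmi}, tensored with $I_r$, by the vector $\begin{bmatrix}\tilde\xi_k\\\tilde\nu_k\end{bmatrix}$. Using the mixed-product property of the Kronecker product, the first block collapses to $(\hat A\otimes I_r)\tilde\xi_k+(\hat B\otimes I_r)\tilde\nu_k=\tilde\xi_{k+1}$ sandwiched by $P\otimes I_r$, together with the $-\tau^2P$ term, so the first block contributes exactly $V_{k+1}-\tau^2 V_k$. The second block contributes $\lambda^1(\tilde w_k^1)^{\top}(M^1\otimes I_r)\tilde w_k^1+\lambda^2(\tilde w_k^2)^{\top}(M^2\otimes I_r)\tilde w_k^2$, which is nonnegative since $\lambda^1,\lambda^2\ge 0$ and each quadratic form is nonnegative by the previous step. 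As the total is $\le 0$, this forces $V_{k+1}-\tau^2 V_k\le 0$; iterating yields $V_k\le\tau^{2k}V_0$.

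Finally I would convert back to $\varphi_k$. Because $P\succ 0$, the eigenvalues of $P\otimes I_r$ lie in $[\lambda_{\min}(P),\lambda_{\max}(P)]$, so the contraction on $V_k$ gives $\|\tilde\xi_k\|\le\sqrt{\kappa_P}\,\tau^k\|\tilde\xi_0\|$. It then remains to compare $\|\tilde\xi_k\|$ with $\|\varphi_k-\varphi_*\|$: since $\tilde\xi_k=\begin{bmatrix}B & 0\\0 & I_r\end{bmatrix}(\varphi_k-\varphi_*)$ and $B$ has full column rank, the block $B$ distorts only the $z$-coordinate, by factors controlled by the extreme singular values of $B$, and combining a lower bound at step $k$ with an upper bound at step $0$ introduces the factor $\kappa_B$. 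I expect this last norm-conversion step, rather than the Lyapunov recursion, to be the delicate part: the state $\xi$ records $Bz$ rather than $z$, so one must invoke the full-column-rank hypothesis on $B$ to recover a bound on $z_k-z_*$ from one on $B(z_k-z_*)$, and carefully track which singular values of $B$ enter at the initial versus the running step so that exactly $\kappa_B$ emerges.
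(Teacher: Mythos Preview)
Your proposal is correct and follows essentially the same approach as the paper: sandwich the Kronecker-inflated LMI between the deviation vector $[\tilde\xi_k;\tilde\nu_k]$, identify the first block as $V_{k+1}-\tau^2 V_k$, use \lemref{lem:sec_iqc} and \lemref{lem:sec_iqc2} together with \eqref{eq:fhat_ghat} to discard the multiplier terms, induct, and then pass from $\xi_k$ to $\varphi_k$. In fact you spell out the final $\kappa_B$ conversion (using $\tilde\xi_k=\mathrm{diag}(B,I_r)(\varphi_k-\varphi_*)$ and the full column rank of $B$) more carefully than the paper, which simply writes ``the conclusion follows.''
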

\begin{proof}
  Define~$r_k$,~$s_k$,~$\beta_k$,~$\gamma_k$,~$\xi_k$,~$\nu_k$,~$w_k^1$, and~$w_k^2$ as before. 
  Choose~$r_*=Ax_*$,~$s_*=Bz_*$, and
\begin{equation*}
w_*^1=\begin{bmatrix}r_*-c \\ \beta_* \end{bmatrix}
\quad
w_*^2=\begin{bmatrix}s_*\\ \gamma_*\end{bmatrix}
\quad
\xi_*=\begin{bmatrix} s_* \\ u_* \end{bmatrix}
\quad
\nu_*=\begin{bmatrix} \beta_* \\ \gamma_* \end{bmatrix}
\end{equation*}
such that~$(\xi_*,\nu_*,w_*^1,w_*^2)$ is a fixed point of the dynamics of \eqref{eq:admm_dynamical_system_update_output} and satisfying~$\beta_* = \nabla \hat{f}(r_*)$,~$\gamma_* \in \partial \hat{g}(s_*)$. 
Now, consider the Kronecker product of the right hand side of \eqref{eq:lmi} and~$I_r$. 
Multiplying this on the left and on the right by~$\begin{bmatrix} (\xi_j-\xi_*)^{\top} & (\nu_j-\nu_*)^{\top} \end{bmatrix}$ and its transpose, respectively, we find
\begin{equation} \label{eq:lmi_proof}
\begin{aligned}
  0 & \ge (\xi_{j+1}-\xi_*)^{\top} P (\xi_{j+1}-\xi_*) \\
  & \quad - \tau^2 (\xi_j-\xi_*)^{\top} P (\xi_j-\xi_*) \\
  & \quad + \lambda^1 (w_j^1-w_*^1)^{\top}M^1(w_j^1-w_*^1) \\
  & \quad + \lambda^2 (w_j^2-w_*^2)^{\top}M^2(w_j^2-w_*^2) .
\end{aligned}
\end{equation}
\lemref{lem:sec_iqc} and \eqref{eq:fhat_ghata} show that the third term on the right hand side of \eqref{eq:lmi_proof} is nonnegative. 
\lemref{lem:sec_iqc2} and \eqref{eq:fhat_ghatb} show that the fourth term on the right hand side of \eqref{eq:lmi_proof} is nonnegative. 
It follows that
\begin{equation*}
  (\xi_{j+1}-\xi_*)^{\top} P (\xi_{j+1}-\xi_*) \le \tau^2 (\xi_j-\xi_*)^{\top} P (\xi_j-\xi_*) .
\end{equation*}
Inducting from~$j=0$ to~$k-1$, we see that
\begin{equation*}
  (\xi_k-\xi_*)^{\top} P (\xi_k-\xi_*) \le \tau^{2k} (\xi_0-\xi_*)^{\top} P (\xi_0-\xi_*) ,
\end{equation*}
for all~$k$. 
It follows that
\begin{equation*}
  \|\xi_k-\xi_*\| \le  \sqrt{\kappa_P} \|\xi_0-\xi_*\| \tau^k .
\end{equation*}
The conclusion follows.
\end{proof}

For fixed values of~$\alpha$,~$\rho_0$,~$\hat{m}$,~$\hat{L}$, and~$\tau$, the feasibility of \eqref{eq:lmi} is a semidefinite program with variables~$P$,~$\lambda^1$, and~$\lambda^2$. 
We perform a binary search over~$\tau$ to find the minimal rate~$\tau$ such that the linear matrix inequality in \eqref{eq:lmi} is satisfied. 
The results are shown in \figref{fig:numerical_rates} for a wide range of condition numbers~$\kappa$, for~$\alpha=1.5$, and for several choices of~$\rho_0$. 
In \figref{fig:numerical_iters}, we plot the values~$-1/\log\tau$ to show the number of iterations required to achieve a desired accuracy. 

\begin{figure}[ht] 
\centering
\includegraphics{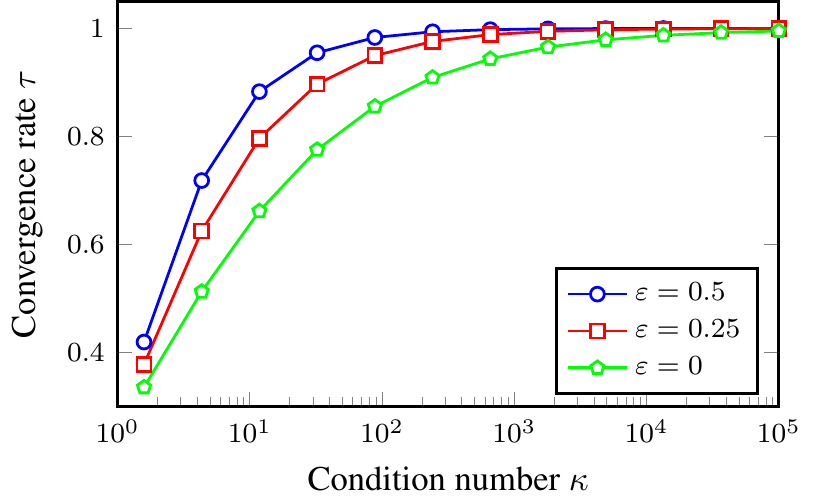}
\caption{For~$\alpha=1.5$ and for several choices of~$\epsilon$ in~$\rho_0=\kappa^{\epsilon}$, we plot the minimal rate~$\tau$ for which the linear matrix inequality in \eqref{eq:lmi} is satisfied as a function of~$\kappa$.}
\label{fig:numerical_rates}
\end{figure}

\begin{figure}[ht] 
\centering
\includegraphics{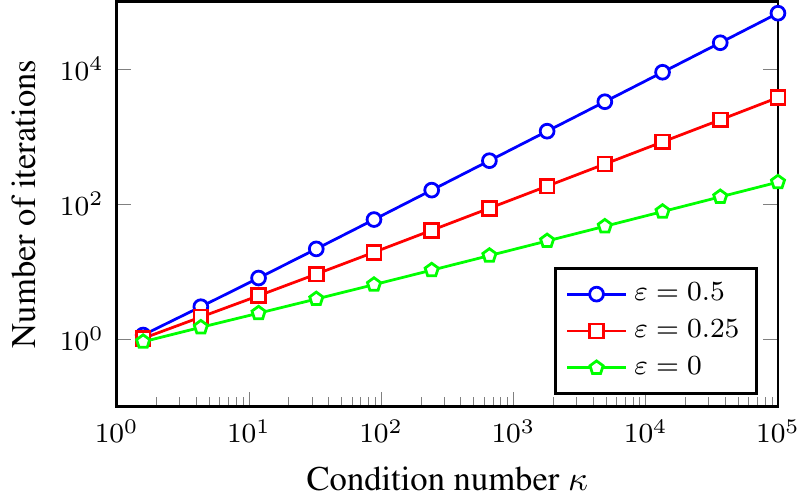}
\caption{For~$\alpha=1.5$ and for several choices of~$\epsilon$ in~$\rho_0=\kappa^{\epsilon}$, we compute the minimal rate~$\tau$ such that the linear matrix inequality in \eqref{eq:lmi} is satisfied, and we plot~$-1/\log\tau$ as a function of~$\kappa$.}
\label{fig:numerical_iters}
\end{figure}

Note that when~$\rho_0=\kappa^{\epsilon}$, the matrix~$M^1$ is given by
\begin{equation*}
  M^1 = \begin{bmatrix} -2\kappa^{-2\epsilon} & \kappa^{-\frac{1}{2}-\epsilon}+\kappa^{\frac{1}{2}-\epsilon} \\ \kappa^{-\frac{1}{2}-\epsilon}+\kappa^{\frac{1}{2}-\epsilon} & -2 \end{bmatrix} ,
\end{equation*}
and so the linear matrix inequality in \eqref{eq:lmi} depends only on~$\kappa$ and not on~$\hat{m}$ and~$\hat{L}$. 
Therefore, we will consider step sizes of this form (recall from \eqref{eq:normalization_defs} that~$\rho=(\hat{m}\hat{L})^{\frac12}\rho_0$). 
The choice~$\epsilon=0$ is common in the literature \citep{giselsson2014diagonal}, but requires the user to know the strong-convexity parameter~$\hat{m}$. 
We also consider the choice~$\epsilon=0.5$, which produces worse guarantees, but does not require knowledge of~$\hat{m}$.

One weakness of \thref{th:lmi_rate} is the fact that the rate we produce is not given as a function of~$\kappa$. 
To use \thref{th:lmi_rate} as stated, we first specify the condition number (for example,~$\kappa=1000$). 
Then we search for the minimal~$\tau$ such that \eqref{eq:lmi} is feasible. 
This produces an upper bound on the convergence rate of \algoref{alg:overrelaxed_admm} (for example,~$\tau=0.9$). 
To remedy this problem, in \secref{sec:specific_rates}, we demonstrate how \thref{th:lmi_rate} can be used to obtain the convergence rate of \algoref{alg:overrelaxed_admm} as a symbolic function of the step size~$\rho$ and the over-relaxation parameter~$\alpha$.

\section{Symbolic Rates for Various~$\rho$ and~$\alpha$} \label{sec:specific_rates}

In \secref{sec:convergence_rates_through_semidefinite_programming}, we demonstrated how to use semidefinite programming to produce numerical convergence rates. 
That is, given a choice of algorithm parameters and the condition number~$\kappa$, we could determine the convergence rate of \algoref{alg:overrelaxed_admm}. 
In this section, we show how \thref{th:lmi_rate} can be used to prove symbolic convergence rates. 
That is, we describe the convergence rate of \algoref{alg:overrelaxed_admm} as a function of~$\rho$,~$\alpha$, and~$\kappa$. 
In \thref{th:analytic_rate}, we prove the linear convergence of \algoref{alg:overrelaxed_admm} for all choices~$\alpha \in (0,2)$ and~$\rho=(\hat{m}\hat{L})^{\frac{1}{2}}\kappa^{\epsilon}$, with~$\epsilon \in (-\infty,\infty)$. 
This result generalizes a number of results in the literature. 
As two examples, \citet{giselsson2014diagonal} consider the case~$\epsilon=0$ and \citet{deng2012global} consider the case~$\alpha=1$ and~$\epsilon=0.5$. 

The rate given in \thref{th:analytic_rate} is loose by a factor of four relative to the lower bound given in \thref{th:lower_bound}. 
However, weakening the rate by a constant factor eases the proof by making it easier to find a certificate for use in \eqref{eq:lmi}.

\begin{theorem} \label{th:analytic_rate}
  Suppose that \assref{ass:conditions} holds. 
  Let the sequences~$(x_k)$,~$(z_k)$, and~$(u_k)$ be generated by running \algoref{alg:overrelaxed_admm} with parameter~$\alpha \in (0,2)$ and with step size~$\rho = (\hat{m}\hat{L})^{\frac{1}{2}}\kappa^{\epsilon}$, where~$\epsilon \in (-\infty,\infty)$. 
  Define~$x_*$,~$z_*$,~$u_*$,~$\varphi_k$, and~$\varphi_*$ as in \thref{th:lmi_rate}. 
  Then for all sufficiently large~$\kappa$, we have
\begin{equation*}
  \|\varphi_k-\varphi_*\| \le C \|\varphi_0-\varphi_*\| \left( 1 - \frac{\alpha}{2\kappa^{0.5 + |\epsilon|}} \right)^k ,
\end{equation*}
where
\begin{equation*}
  C = \kappa_B \sqrt{\max\left\{ \tfrac{\alpha}{2-\alpha}, \tfrac{2-\alpha}{\alpha} \right\}} .
\end{equation*}
\end{theorem}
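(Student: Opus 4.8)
The plan is to reduce everything to \thref{th:lmi_rate} by exhibiting an explicit feasible certificate for the linear matrix inequality \eqref{eq:lmi} at the prescribed rate $\tau = 1 - \tfrac{\alpha}{2}\kappa^{-(0.5+|\epsilon|)}$. Since the conclusion of \thref{th:lmi_rate} carries the prefactor $\kappa_B\sqrt{\kappa_P}$ and we must land on $C = \kappa_B\sqrt{\max\{\tfrac{\alpha}{2-\alpha},\tfrac{2-\alpha}{\alpha}\}}$, the target constant forces $\kappa_P = \max\{\tfrac{\alpha}{2-\alpha},\tfrac{2-\alpha}{\alpha}\}$. This already pins down the eigenvalue ratio of the $2\times2$ matrix $P$ we seek: it is a multiple of the identity at $\alpha=1$ and becomes increasingly anisotropic as $\alpha\to0$ or $\alpha\to2$, which is a strong hint for its form.

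To build the certificate, I would first substitute $\rho_0 = \kappa^{\epsilon}$ so that $M^1$ depends only on $\kappa$ (as noted after \thref{th:lmi_rate}), and write $\delta = \kappa^{-(0.5+|\epsilon|)}$, so that $\tau = 1 - \tfrac{\alpha}{2}\delta$ and $\tau^2 = 1 - \alpha\delta + O(\delta^2)$. I would then propose $P$, $\lambda^1$, and $\lambda^2$ as explicit functions of $\alpha$ and $\kappa$: the matrix $P\succ0$ with the condition number forced above, and the multipliers scaled by the appropriate powers of $\kappa$ so that the curvature contribution entering through $M^1$ balances the $-\tau^2 P$ term in \eqref{eq:lmi}. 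Plugging the explicit matrices $\hat{A},\hat{B}$ from \eqref{eq:admm_matrices_update}, the output matrices $\hat{C}^1,\hat{D}^1,\hat{C}^2,\hat{D}^2$ from \eqref{eqref:admm_matrices_output}, and $M^1,M^2$ into \eqref{eq:lmi} turns the problem into showing that a concrete symmetric $4\times4$ matrix, whose entries are polynomials in $\alpha$, $\delta$, and $\kappa^{\pm1/2}$, is negative semidefinite.

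To verify negative semidefiniteness I would reduce the $4\times4$ condition by a Schur complement on the input block coming from $\hat{B}^{\top}P\hat{B}$ and its $M^1,M^2$ corrections, eliminating the input coordinates and collapsing \eqref{eq:lmi} onto a $2\times2$ inequality in the state coordinates. I would then check that the reduced matrix is $\preceq0$ by a dominant-balance argument in the small parameter $\delta$: establish the inequality at leading order with a \emph{strictly} negative margin, so that the lower-order corrections are dominated once $\kappa$ is large enough. This is exactly where the hypothesis ``for all sufficiently large $\kappa$'' is consumed, and the deliberate factor-of-four slack in the rate relative to \thref{th:lower_bound} is what makes the leading-order margin strictly negative rather than merely zero, leaving room for the remainder terms.

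The main obstacle is the first step: reverse-engineering $P$, $\lambda^1$, and $\lambda^2$ as functions of $\alpha$, $\kappa$, and $\epsilon$ so that \eqref{eq:lmi} becomes feasible at \emph{precisely} the advertised $\tau$, rather than at some unquantified rate. Once a correct guess is in hand, the remaining verification is a tedious but routine determinant and Schur-complement computation followed by the asymptotic estimate. Two points will require care: handling the sign of $\epsilon$ uniformly, since $|\epsilon|$ enters $\tau$ while $M^1$ itself is asymmetric in its $\kappa^{+1/2}$ and $\kappa^{-1/2}$ terms; and confirming that the construction is uniform over all $\alpha\in(0,2)$, including the behavior near the endpoints where $C\to\infty$ and $P$ degenerates toward a singular matrix.
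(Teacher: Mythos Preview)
Your plan matches the paper's proof exactly in spirit: reduce to \thref{th:lmi_rate} by exhibiting an explicit certificate $(P,\lambda^1,\lambda^2)$ at the rate $\tau = 1 - \tfrac{\alpha}{2}\kappa^{-(0.5+|\epsilon|)}$, then verify the $4\times4$ inequality \eqref{eq:lmi} by an asymptotic argument in large $\kappa$, splitting on the sign of~$\epsilon$. Your deduction of the eigenvalue ratio of $P$ from the target constant $C$ is correct and leads to the right object.

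The only substantive gap is the one you yourself flag: you have not actually produced the certificate, and this is the entire content of the proof. The paper's choice is
\[
  P = \begin{bmatrix} 1 & \alpha-1 \\ \alpha-1 & 1 \end{bmatrix},
  \qquad
  \lambda^1 = \alpha\,\kappa^{\epsilon-\frac12},
  \qquad
  \lambda^2 = \alpha,
\]
so that $P$ has eigenvalues $\alpha$ and $2-\alpha$, yielding exactly $\kappa_P = \max\{\tfrac{\alpha}{2-\alpha},\tfrac{2-\alpha}{\alpha}\}$ as you anticipated. With this choice the right-hand side of \eqref{eq:lmi} becomes $-\tfrac14\alpha\kappa^{-2}M$ for a symmetric $4\times4$ matrix $M$ whose last row and column vanish identically; the verification therefore reduces to a $3\times3$ positive-semidefiniteness check. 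The paper handles this not by a Schur complement but by inspecting the three leading principal minors of $M$ and showing that the dominant term of each (as a polynomial in $\kappa$) has a positive coefficient depending on $\alpha\in(0,2)$; the cases $\epsilon\ge0$ and $\epsilon<0$ give different entry formulas for $M$ and are treated separately. Your Schur-complement route would work equally well, but the principal-minor check is slightly cleaner here because the degeneracy in the fourth row and column makes a direct determinant expansion natural.
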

\begin{proof}
  We claim that for all sufficiently large~$\kappa$, the linear matrix inequality in \eqref{eq:lmi} is satisfied with the rate~$\tau=1-\frac{\alpha}{2\kappa^{0.5+|\epsilon|}}$ and with certificate
\begin{equation*}
  \lambda^1=\alpha \kappa^{\epsilon-0.5}
  \quad 
  \lambda^2=\alpha 
  \quad 
  P = \begin{bmatrix} 1 & \alpha - 1 \\ \alpha - 1 & 1 \end{bmatrix}  .
\end{equation*}
The matrix on the right hand side of \eqref{eq:lmi} can be expressed as~$-\frac{1}{4}\alpha\kappa^{-2} M$, where~$M$ is a symmetric~$4 \times 4$ matrix whose last row and column consist of zeros. 
We wish to prove that~$M$ is positive semidefinite for all sufficiently large~$\kappa$. 
To do so, we consider the cases~$\epsilon \ge 0$ and~$\epsilon < 0$ separately, though the two cases will be nearly identical. 
First suppose that~$\epsilon \ge 0$. 
In this case, the nonzero entries of~$M$ are specified by
\begin{align*}
  M_{11} & = \alpha \kappa^{1-2\epsilon} + 4\kappa^{\frac32-\epsilon} \\
  M_{12} & = \alpha^2\kappa^{1-2\epsilon} - \alpha\kappa^{1-2\epsilon} + 12\kappa^{\frac32-\epsilon}-4\alpha\kappa^{\frac32-\epsilon} \\
  M_{13} & = 4\kappa + 8\kappa^{\frac32-\epsilon} \\
  M_{22} & = 8\kappa^2-4\alpha\kappa^2+\alpha\kappa^{1-2\epsilon} + 4\kappa^{\frac32-\epsilon} \\
  M_{23} & = 4\kappa+8\kappa^2-4\alpha\kappa^2+8\kappa^{\frac32-\epsilon} \\
  M_{33} & = 8\kappa+8\kappa^2 - 4\alpha\kappa^2 + 8\kappa^{\frac32-\epsilon}+8\kappa^{\frac32+\epsilon} .
\end{align*}
We show that each of the first three leading principal minors of~$M$ is positive for sufficiently large~$\kappa$. 
To understand the behavior of the leading principal minors, it suffices to look at their leading terms. 
For large~$\kappa$, the first leading principal minor (which is simple~$M_{11}$) is dominated by the term~$4\kappa^{\frac32-\epsilon}$, which is positive.
Similarly, the second leading principal minor is dominated by the term~$16(2-\alpha)\kappa^{\frac72-\epsilon}$, which is positive. 
When~$\epsilon > 0$, the third leading principal minor is dominated by the term~$128(2-\alpha)\kappa^5$, which is positive. 
When~$\epsilon=0$, the third leading principal minor is dominated by the term~$64\alpha(2-\alpha)^2\kappa^5$, which is positive. 
Since these leading coefficients are all positive, it follows that for all sufficiently large~$\kappa$, the matrix~$M$ is positive semidefinite. 

Now suppose that~$\epsilon < 0$. 
In this case, the nonzero entries of~$M$ are specified by
\begin{align*}
  M_{11} & = 8\kappa^{\frac32-\epsilon}-4\kappa^{\frac32+\epsilon}+\alpha\kappa^{1+2\epsilon} \\
  M_{12} & = 8\kappa^{\frac32-\epsilon}+4\kappa^{\frac32+\epsilon}-4\alpha\kappa^{\frac32+\epsilon}-\alpha\kappa^{1+2\epsilon}+\alpha^2\kappa^{1+2\epsilon} \\ 
  M_{13} & = 4\kappa+8\kappa^{\frac32-\epsilon} \\
  M_{22} & = 8\kappa^2-4\alpha\kappa^2+8\kappa^{\frac32-\epsilon}-4\kappa^{\frac32+\epsilon}+\alpha\kappa^{1+2\epsilon} \\
  M_{23} & = 4\kappa+8\kappa^2-4\alpha\kappa^2+8\kappa^{\frac32-\epsilon} \\
  M_{33} & = 8\kappa+8\kappa^2-4\alpha\kappa^2+8\kappa^{\frac32-\epsilon}+8\kappa^{\frac32+\epsilon} .
\end{align*}
As before, we show that each of the first three leading principal minors of~$M$ is positive. 
For large~$\kappa$, the first leading principal minor (which is simple~$M_{11}$) is dominated by the term~$8\kappa^{\frac32-\epsilon}$, which is positive.
Similarly, the second leading principal minor is dominated by the term~$32(2-\alpha)\kappa^{\frac72-\epsilon}$, which is positive. 
The third leading principal minor is dominated by the term~$128(2-\alpha)\kappa^5$, which is positive. 
Since these leading coefficients are all positive, it follows that for all sufficiently large~$\kappa$, the matrix~$M$ is positive semidefinite. 

The result now follows from \thref{th:lmi_rate} by noting that~$P$ has eigenvalues~$\alpha$ and~$2-\alpha$. 
\end{proof}
Note that since the matrix~$P$ doesn't depend on~$\rho$, the proof holds even when the step size changes at each iteration. 

\section{Lower Bounds} \label{sec:lower_bounds}

In this section, we probe the tightness of the upper bounds on the convergence rate of \algoref{alg:overrelaxed_admm} given by \thref{th:lmi_rate}. 
The construction of the lower bound in this section is similar to a construction given in \citet{ghadimi2014optimal}. 

Let~$Q$ be a~$d$-dimensional symmetric positive-definite matrix whose largest and smallest eigenvalues are~$L$ and~$m$ respectively. 
Let~$f(x)=\frac{1}{2}x^{\top}Qx$ be a quadratic and let~$g(z)=\frac{\delta}{2}\|z\|^2$ for some~$\delta \ge 0$. 
Let~$A=I_d$,~$B=-I_d$, and~$c=0$. 
With these definitions, the optimization problem in \eqref{eq:minimization_problem} is solved by~$x=z=0$. 
The updates for \algoref{alg:overrelaxed_admm} are given by
\begin{subequations} \label{eq:lower_bound_updates}
\begin{align}
  x_{k+1} & = \rho(Q+\rho I)^{-1}(z_k-u_k) \label{eq:lower_bound_updates_a} \\
  z_{k+1} & = \frac{\rho}{\delta + \rho}(\alpha x_{k+1} + (1-\alpha)z_k + u_k) \label{eq:lower_bound_updates_b} \\
  u_{k+1} & = u_k + \alpha x_{k+1} + (1-\alpha)z_k - z_{k+1} . \label{eq:lower_bound_updates_c}
\end{align}
\end{subequations}
Solving for~$z_k$ in \eqref{eq:lower_bound_updates_b} and substituting the result into \eqref{eq:lower_bound_updates_c} gives~$u_{k+1} = \frac{\delta}{\rho}z_{k+1}$. 
Then eliminating~$x_{k+1}$ and~$u_k$ from \eqref{eq:lower_bound_updates_b} using \eqref{eq:lower_bound_updates_a} and the fact that~$u_k=\frac{\delta}{\rho}{z_k}$ allows us to express the update rule purely in terms of~$z$ as
\begin{equation*}
  z_{k+1} = \underbrace{\left( \frac{\alpha\rho(\rho - \delta)}{\rho+\delta}(Q + \rho I)^{-1} + \frac{\rho - \alpha\rho + \delta}{\rho+\delta}I \right)}_T z_k .
\end{equation*}
Note that the eigenvalues of~$T$ are given by
\begin{equation} \label{eq:T_eigs}
  1 - \frac{\alpha\rho(\lambda+\delta)}{(\rho+\delta)(\lambda+\rho)} ,
\end{equation}
where~$\lambda$ is an eigenvalue of~$Q$. 
We will use this setup to construct a lower bound on the worst-case convergence rate of \algoref{alg:overrelaxed_admm} in \thref{th:lower_bound}. 

\begin{theorem} \label{th:lower_bound}
  Suppose that \assref{ass:conditions} holds. 
  The worst-case convergence rate of \algoref{alg:overrelaxed_admm}, when run with step size~$\rho=(\hat{m}\hat{L})^{\frac12}\kappa^{\epsilon}$ and over-relaxation parameter~$\alpha$, is lower-bounded by
  \begin{equation} \label{eq:lower_bound_worst_case}
    1 - \frac{2\alpha}{1 + \kappa^{0.5 + |\epsilon|}} .
  \end{equation}
\end{theorem}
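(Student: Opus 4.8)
The plan is to exploit the fact that, for the quadratic family set up above, over-relaxed ADMM has already been reduced to the \emph{linear} recursion $z_{k+1} = T z_k$, so the convergence behavior is governed entirely by the spectrum of $T$. Since $x_* = z_* = 0$ and, as derived above, $u_k = \frac{\delta}{\rho} z_k$ at every iteration (after initializing $u_0 = \frac{\delta}{\rho} z_0$), the state satisfies $\|\varphi_k - \varphi_*\| = \sqrt{1 + \delta^2/\rho^2}\,\|z_k\|$. Because $Q$ is symmetric, $T$ is symmetric with real eigenvectors; if I initialize $z_0$ to be an eigenvector of $T$ with eigenvalue $\mu$, then $\|\varphi_k - \varphi_*\| = |\mu|^k \|\varphi_0 - \varphi_*\|$, and no estimate of the form $\|\varphi_k - \varphi_*\| \le C \|\varphi_0 - \varphi_*\| \tau^k$ can hold with $\tau < |\mu|$. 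Hence the worst-case rate (over admissible data and initializations) is at least $|\mu|$ for \emph{every} eigenvalue of $T$, across every admissible choice of $Q$ and $\delta$.

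It therefore suffices to exhibit one admissible instance and one eigenvalue of $T$ whose magnitude is at least the target value in \eqref{eq:lower_bound_worst_case}. I would take $Q = \operatorname{diag}(m, L)$, so that $\kappa_f = L/m = \kappa$ (as $A = I$ forces $\kappa_A = 1$), and read the eigenvalues of $T$ off \eqref{eq:T_eigs}, namely $\mu(\lambda) = 1 - \frac{\alpha\rho(\lambda+\delta)}{(\rho+\delta)(\lambda+\rho)}$ for $\lambda \in \{m, L\}$. Recalling $\rho = \sqrt{mL}\,\kappa^{\epsilon}$, the argument splits on the sign of $\epsilon$. For $\epsilon \ge 0$, set $\delta = 0$ and $\lambda = m$, giving $\mu = 1 - \frac{\alpha m}{m+\rho} = 1 - \frac{\alpha}{1 + \kappa^{0.5+\epsilon}}$. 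For $\epsilon < 0$, set $\lambda = L$ and send $\delta \to \infty$ (large curvature in $g$), giving $\mu \to 1 - \frac{\alpha\rho}{L+\rho} = 1 - \frac{\alpha}{1 + \kappa^{0.5-\epsilon}}$. In both cases the exhibited eigenvalue equals $1 - \frac{\alpha}{1 + \kappa^{0.5+|\epsilon|}}$, which already \emph{dominates} the claimed bound $1 - \frac{2\alpha}{1 + \kappa^{0.5+|\epsilon|}}$; the looser constant $2\alpha$ leaves enough slack that a finite (large) $\delta$, rather than the limit, suffices in the second case, by continuity of $\mu$ in $\delta$. One also checks $\mu \ge 0$, so that $|\mu| = \mu$, which holds for all sufficiently large $\kappa$ since the subtracted fraction tends to $0$.

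The main obstacle I anticipate is not any single computation but correctly organizing the optimization over the two free parameters $\lambda \in [m, L]$ and $\delta \in [0, \infty)$. The eigenvalue expression is symmetric under interchanging $\lambda$ and $\delta$, yet the feasible ranges are not (only $\delta$ is unbounded above), so the extremal configuration that pushes $\mu$ closest to $1$ flips between $(\lambda, \delta) = (m, 0)$ and $(\lambda, \delta) = (L, \infty)$ as $\epsilon$ changes sign. The delicate step is arguing that this boundary configuration, rather than some interior stationary point, maximizes $|\mu|$, and in particular that the eigenvalue near $1$ dominates the competing eigenvalue near $1 - \alpha$ (which may be negative) once $\kappa$ is large. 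This is the only place where the sign of $\epsilon$ and the magnitude of $\kappa$ genuinely interact, and it is what forces the ``for sufficiently large $\kappa$'' qualification inherited from the asymptotic comparison.
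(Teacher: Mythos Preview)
Your approach is essentially the paper's: exhibit a single quadratic instance and read off an eigenvalue of $T$ from \eqref{eq:T_eigs}. For $\epsilon \ge 0$ you make exactly the paper's choice $(\delta,\lambda)=(0,m)$, yielding $1-\frac{\alpha}{1+\kappa^{0.5+\epsilon}}$. For $\epsilon<0$ the paper takes the explicit finite value $\delta=L$ with $\lambda=L$, which gives
\[
\mu \;=\; 1-\frac{2\alpha}{(1+\kappa^{0.5-\epsilon})(1+\kappa^{\epsilon-0.5})}\;\ge\; 1-\frac{2\alpha}{1+\kappa^{0.5-\epsilon}},
\]
directly matching \eqref{eq:lower_bound_worst_case} without any limiting or continuity argument. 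Your $\delta\to\infty$ route is correct but needlessly indirect; the concrete choice $\delta=L$ already achieves the bound with the factor $2\alpha$ appearing exactly.

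Your final paragraph reflects a misconception about what must be shown. A lower bound on the \emph{worst-case} rate requires only \emph{one} admissible instance whose rate meets the target; there is no need to optimize $|\mu|$ over $(\lambda,\delta)$, to worry about interior stationary points, or to compare against the other eigenvalue near $1-\alpha$. Likewise, no ``sufficiently large $\kappa$'' qualifier is needed here: for $\alpha\in(0,2)$ and $\kappa\ge 1$ the exhibited eigenvalues are automatically nonnegative, so $|\mu|=\mu$ holds outright, and the theorem as stated carries no asymptotic restriction.
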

\begin{proof}
  First consider the case~$\epsilon \ge 0$. 
  Choosing~$\delta = 0$ and~$\lambda=m$, from \eqref{eq:T_eigs}, we see that~$T$ has eigenvalue
  \begin{equation} \label{eq:lower_bound_low_epsilon}
    1 - \frac{\alpha}{1 + \kappa^{0.5+\epsilon}} .
  \end{equation}
  When initialized with~$z$ as the eigenvector corresponding to this eigenvalue, \algoref{alg:overrelaxed_admm} will converge linearly with rate given exactly by \eqref{eq:lower_bound_low_epsilon}, which is lower bounded by the expression in \eqref{eq:lower_bound_worst_case} when~$\epsilon \ge 0$. 

  Now suppose that~$\epsilon < 0$. 
  Choosing~$\delta = L$ and~$\lambda = L$, after multiplying the numerator and denominator of~\eqref{eq:T_eigs} by~$\kappa^{0.5-\epsilon}$, we see that~$T$ has eigenvalue
  \begin{equation} \label{eq:lower_bound_higher_epsilon}
    1 - \frac{2\alpha}{(1+\kappa^{0.5-\epsilon})(\kappa^{-0.5+\epsilon}+1)} \ge 1 - \frac{2\alpha}{1+\kappa^{0.5-\epsilon}} .
  \end{equation}
  When initialized with~$z$ as the eigenvector corresponding to this eigenvalue, \algoref{alg:overrelaxed_admm} will converge linearly with rate given exactly by the left hand side of \eqref{eq:lower_bound_higher_epsilon}, which is lower bounded by the expression in \eqref{eq:lower_bound_worst_case} when~$\epsilon < 0$. 
\end{proof}

\figref{fig:upper_lower_bounds} compares the lower bounds given by \eqref{eq:lower_bound_low_epsilon} with the upper bounds given by \thref{th:lmi_rate} for~$\alpha=1.5$ and for several choices of~$\rho=(\hat{m}\hat{L})^{\frac12}\kappa^{\epsilon}$ satisfying~$\epsilon \ge 0$. 
The upper and lower bounds agree visually on the range of choices~$\epsilon$ depicted, demonstrating the practical tightness of the upper bounds given by \thref{th:lmi_rate} for a large range of choices of parameter values. 

\begin{figure}[ht] 
\centering
\includegraphics{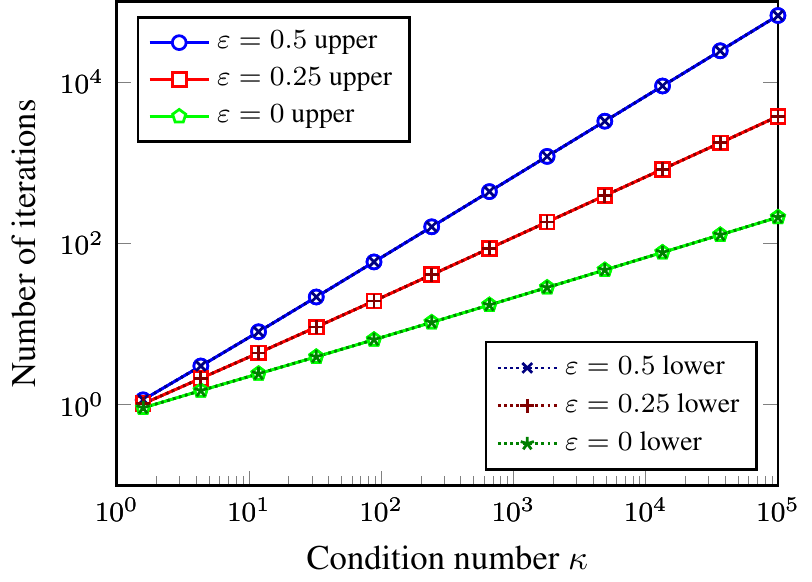}
\caption{For~$\alpha=1.5$ and for several choices~$\epsilon$ in~$\rho_0=\kappa^{\epsilon}$, we plot~$-1/\log\tau$ as a function of~$\kappa$, both for the lower bound on~$\tau$ given by \eqref{eq:lower_bound_low_epsilon} and the upper bound on~$\tau$ given by \thref{th:lmi_rate}. 
For each choice of~$\epsilon$ in~$\{0.5,0.25,0\}$, the lower and upper bounds agree visually. 
This agreement demonstrates the practical tightness of the upper bounds given by \thref{th:lmi_rate} for a large range of choices of parameter values.}
\label{fig:upper_lower_bounds}
\end{figure}

\section{Related Work}
\label{sec:related_work}

Several recent papers have studied the linear convergence of \algoref{alg:admm} but do not extend to \algoref{alg:overrelaxed_admm}. 
\citet{deng2012global} prove a linear rate of convergence for ADMM in the strongly convex case. 
\citet{iutzeler2014linear} prove the linear convergence of a specialization of ADMM to a class of distributed optimization problems under a local strong-convexity condition. 
\citet{hong2012linear} prove the linear convergence of a generalization of ADMM to a multiterm objective in the setting where each term can be decomposed as a strictly convex function and a polyhedral function. 
In particular, this result does not require strong convexity.

More generally, there are a number of results for operator splitting methods in the literature. 
\citet{lions1979splitting} and \citet{eckstein1998operator} analyze the convergence of several operator splitting schemes. 
More recently, \citet{patrinos2014douglas,patrinos2014forward} prove the equivalence of forward-backward splitting and Douglas--Rachford splitting with a scaled version of the gradient method applied to unconstrained nonconvex surrogate functions (called the forward-backward envelope and the Douglas--Rachford envelope respectively). 
\citet{goldstein2012fast} propose an accelerated version of ADMM in the spirit of Nesterov, and prove a~$O(1/k^2)$ convergence rate in the case where~$f$ and~$g$ are both strongly convex and~$g$ is quadratic. 

The theory of over-relaxed ADMM is more limited. 
\citet{eckstein1992douglas} prove the convergence of over-relaxed ADMM but do not give a rate. 
More recently, \citet{davis2014convergenceA,davis2014convergenceB} analyze the convergence rates of ADMM in a variety of settings. 
\citet{giselsson2014diagonal} prove the linear convergence of Douglas--Rachford splitting in the strongly-convex setting. 
They use the fact that ADMM is Douglas--Rachford splitting applied to the dual problem \citep{eckstein1992douglas} to derive a linear convergence rate for over-relaxed ADMM with a specific choice of step size~$\rho$. 
\citet{eckstein1994parallel} gives convergence results for several specializations of ADMM, and found that over-relaxation with~$\alpha=1.5$ empirically sped up convergence. 
\citet{ghadimi2014optimal} give some guidance on tuning over-relaxed ADMM in the quadratic case. 

Unlike prior work, our framework requires no assumptions on the parameter choices in \algoref{alg:overrelaxed_admm}. 
For example, \thref{th:lmi_rate} certifies the linear convergence of \algoref{alg:overrelaxed_admm} even for values~$\alpha > 2$. 
In our framework, certifying a convergence rate for an arbitrary choice of parameters amounts to checking the feasibility of a~$4\times 4$ semidefinite program, which is essentially instantaneous, as opposed to formulating a proof.

\section{Selecting Algorithm Parameters} \label{sec:selecting_hyperparameters}

In this section, we show how to use the results of \secref{sec:convergence_rates_through_semidefinite_programming} to select the parameters~$\alpha$ and~$\rho$ in \algoref{alg:overrelaxed_admm} and we show the effect on a numerical example. 

Recall that given a choice of parameters~$\alpha$ and~$\rho$ and given the condition number~$\kappa$, \thref{th:lmi_rate} gives an upper bound on the convergence rate of \algoref{alg:overrelaxed_admm}. 
Therefore, one approach to parameter selection is to do a grid search over the space of parameters for the choice that minimizes the upper bound provided by \thref{th:lmi_rate}. 
We demonstrate this approach numerically for a distributed Lasso problem, but first we demonstrate that the usual range of~$(0,2)$ for the over-relaxation parameter~$\alpha$ is too limited, that more choices of~$\alpha$ lead to linear convergence. 
In \figref{fig:alpha_interval}, we plot the largest value of~$\alpha$ found through binary search such that \eqref{eq:lmi} is satisfied for some~$\tau < 1$ as a function of~$\kappa$. 
Proof techniques in prior work do not extend as easily to values of~$\alpha > 2$. 
In our framework, we simply change some constants in a small semidefinite program. 

\begin{figure}[ht] 
\centering
\includegraphics{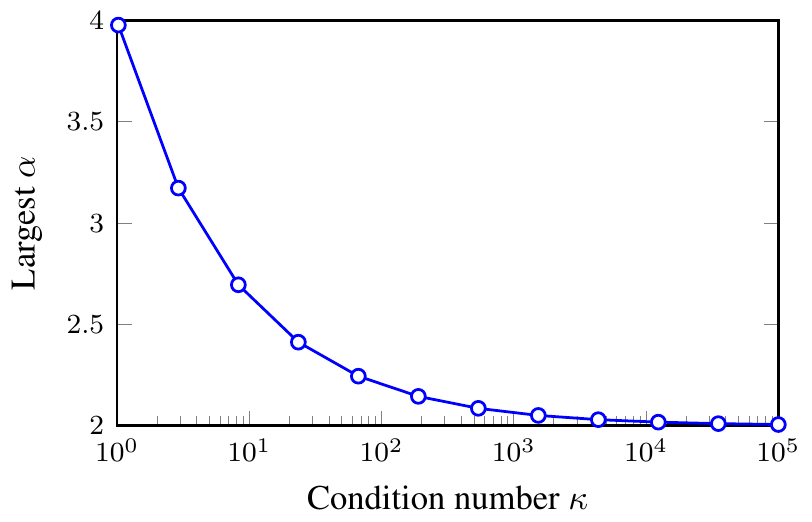}
\caption{As a function of~$\kappa$, we plot the largest value of~$\alpha$ such that \eqref{eq:lmi} is satisfied for some~$\tau<1$. 
In this figure, we set~$\epsilon=0$ in~$\rho_0=\kappa^{\epsilon}$.}
\label{fig:alpha_interval}
\end{figure}

\subsection{Distributed Lasso} \label{sec:distributed_lasso}

Following \citet{deng2012global}, we give a numerical demonstration with a distributed Lasso problem of the form
\begin{align*}
  \text{minimize} & \quad \sum_{i=1}^N \frac{1}{2\mu} \| A_ix_i - b_i\|^2 + \|z\|_1 \\
    \text{subject to} & \quad x_i-z=0 \quad \text{for all} \quad i=1,\ldots,N .
\end{align*}
Each~$A_i$ is a tall matrix with full column rank, and so the first term in the objective will be strongly convex and its gradient will be Lipschitz continuous. 
As in \citet{deng2012global}, we choose~$N=5$ and~$\mu=0.1$. 
Each~$A_i$ is generated by populating a~$600 \times 500$ matrix with independent standard normal entries and normalizing the columns. 
We generate each~$b_i$ via~$b_i=A_ix^0+\epsilon_i$, where~$x^0$ is a sparse~$500$-dimensional vector with~$250$ independent standard normal entries, and~$\epsilon_i \sim \mathcal N(0,10^{-3}I)$. 

In \figref{fig:hyperparameter_selection}, we compute the upper bounds on the convergence rate given by \thref{th:lmi_rate} for a grid of values of~$\alpha$ and~$\rho$. 
Each line corresponds to a fixed choice of~$\alpha$, and we plot only a subset of the values of~$\alpha$ to keep the plot manageable. 
We omit points corresponding to parameter values for which the linear matrix inequality in \eqref{eq:lmi} was not feasible for any value of~$\tau < 1$.

\begin{figure}[ht] 
\centering
\includegraphics{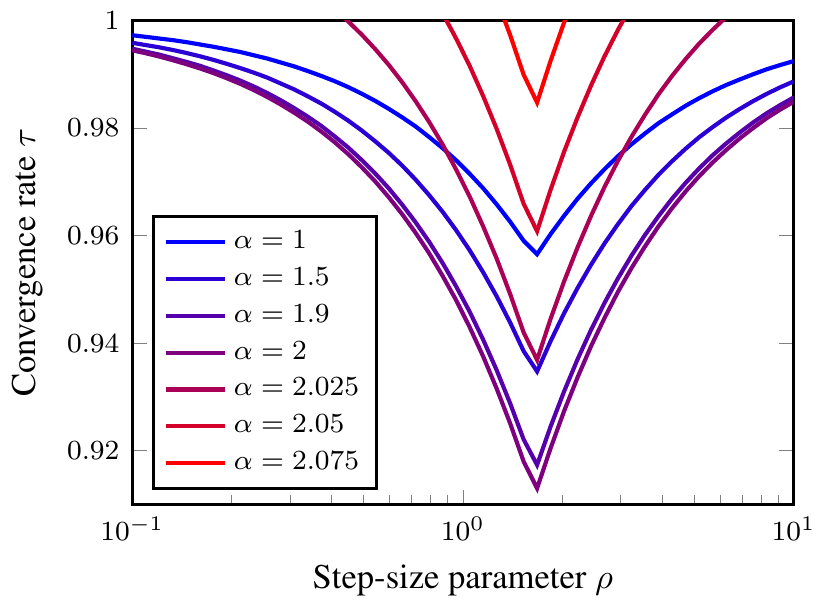}
\caption{We compute the upper bounds on the convergence rate given by \thref{th:lmi_rate} for eighty-five values of~$\alpha$ evenly spaced between~$0.1$ and~$2.2$ and fifty values of~$\rho$ geometrically spaced between~$0.1$ and~$10$. 
Each line corresponds to a fixed choice of~$\alpha$, and we show only a subset of the values of~$\alpha$ to keep the plot manageable. 
We omit points corresponding to parameter values for which \eqref{eq:lmi} is not feasible for any value of~$\tau < 1$. 
This analysis suggests choosing~$\alpha=2.0$ and~$\rho=1.7$.}
\label{fig:hyperparameter_selection}
\end{figure}

In \figref{fig:distributed_lasso_dists}, we run \algoref{alg:overrelaxed_admm} for the same values of~$\alpha$ and~$\rho$. 
We then plot the number of iterations needed for~$z_k$ to reach within~$10^{-6}$ of a precomputed reference solution. 
We plot lines corresponding to only a subset of the values of~$\alpha$ to keep the plot manageable, and we omit points corresponding to parameter values for which \algoref{alg:overrelaxed_admm} exceeded~$1000$ iterations. 
For the most part, the performance of \algoref{alg:overrelaxed_admm} as a function of~$\rho$ closely tracked the performance predicted by the upper bounds in \figref{fig:hyperparameter_selection}. 
Notably, smaller values of~$\alpha$ seem more robust to poor choices of~$\rho$. 
The parameters suggested by our analysis perform close to the best of any parameter choices. 

\begin{figure}[ht] 
\centering
\includegraphics{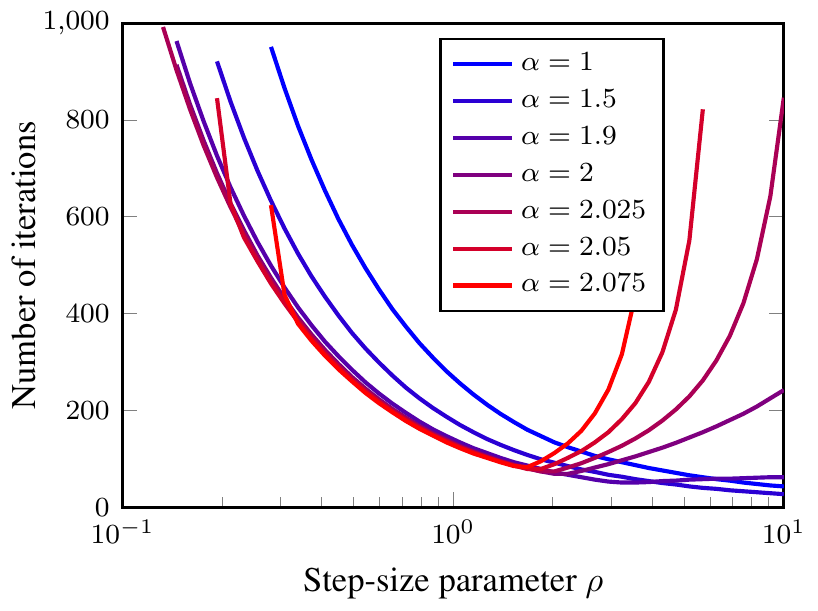}
\caption{We run \algoref{alg:overrelaxed_admm} for eighty-five values of~$\alpha$ evenly spaced between~$0.1$ and~$2.2$ and fifty value of~$\rho$ geometrically spaced between~$0.1$ and~$10$. 
We plot the number of iterations required for~$z_k$ to reach within~$10^{-6}$ of a precomputed reference solution. 
We show only a subset of the values of~$\alpha$ to keep the plot manageable. 
We omit points corresponding to parameter values for which \algoref{alg:overrelaxed_admm} exceeded~$1000$ iterations. } 
\label{fig:distributed_lasso_dists}
\end{figure}

\section{Discussion}
\label{sec:discussion}

We showed that a framework based on semidefinite programming can be used to prove convergence rates for the alternating direction method of multipliers and allows a unified treatment of the algorithm's many variants, which arise through the introduction of additional parameters. 
We showed how to use this framework for establishing convergence rates, as in \thref{th:lmi_rate} and \thref{th:analytic_rate}, and how to use this framework for parameter selection in practice, as in \secref{sec:selecting_hyperparameters}. 
The potential uses are numerous. 
This framework makes it straightforward to propose new algorithmic variants, for example, by introducing new parameters into \algoref{alg:overrelaxed_admm} and using \thref{th:lmi_rate} to see if various settings of these new parameters give rise to improved guarantees. 

In the case that \assref{ass:conditions} does not hold, the most likely cause is that we lack the strong convexity of~$f$. 
One approach to handling this is to run \algoref{alg:overrelaxed_admm} on the modified function~$f(x)+\frac{\delta}{2}\|x\|^2$. 
By completing the square in the~$x$ update, we see that this amounts to an extremely minor algorithmic modification (it only affects the~$x$ update). 

It should be clear that other operator splitting methods such as Douglas--Rachford splitting and forward-backward splitting can be cast in this framework and analyzed using the tools presented here.

\section*{Acknowledgments} 
This research is supported in part by NSF CISE Expeditions award CCF-1139158, LBNL award 7076018, DARPA XData award FA8750-12-2-0331, AFOSR award FA9550-12-1-0339, NASA grant NRA NNX12AM55A, ONR grants N00014-11-1-0688 and N00014-14-1-0024, US ARL and US ARO grant W911NF-11-1-0391, NSF awards CCF-1359814 and CCF-1217058, NSF grant DGE-1106400, a Sloan Research Fellowship, and gifts from Amazon Web Services, Google, SAP, The Thomas and Stacey Siebel Foundation, Adatao, Adobe, Apple, Blue Goji, Bosch, C3Energy, Cisco, Cray, Cloudera, EMC, Ericsson, Facebook, Guavus, Huawei, Informatica, Intel, Microsoft, NetApp, Pivotal, Samsung, Splunk, Virdata, VMware, and Yahoo!. 

\bibliography{refs}
\bibliographystyle{icml2015}

\end{document}